\newcommand{\pre}{a}
\newcommand{\alp}{\alpha}
\newcommand\res{\mathop{\hbox{\vrule height 7pt width .5pt depth 0pt
\vrule height .5pt width 6pt depth 0pt}}\nolimits}
\definecolor{green}{rgb}{0,.5,0}
\newcommand\Id{\mathrm{Id}}
\newcommand\sym{\mathrm{sym}}
\newcommand\loc{\mathrm{loc}}
\newcommand\Tr{\mathrm{Tr\,}}
\newcommand\R{\mathbb{R}}
\newcommand\Z{\mathbb{Z}}
\newcommand\calH{\mathcal{H}}
\newcommand\scrS{\mathscr{S}}
\newcommand\calA{\mathcal{A}}
\newcommand\calL{\mathcal{L}}
\newcommand{\LM}[1]{\hbox{\vrule width.2pt \vbox to#1pt{\vfill \hrule width#1pt height.2pt}}}
\newcommand{\LL}{{\mathchoice{\,\LM7\,}{\,\LM7\,}{\,\LM5\,}{\,\LM{3.35}\,}}}
\newtheorem{theorem}{Theorem}[section]
\newtheorem{lemma}[theorem]{Lemma}
\newtheorem{remark}[theorem]{Remark}
\numberwithin{equation}{section}
\newcounter{Nummer}
\newcommand{\Hn}{{\mathcal H}^{n-1}}
\newcommand{\Msym}{\mathbb{R}^{n{\times} n}_\sym}
\newcommand{\sumxi}{\sum_{\scriptsize \begin{array}{c}
\xi\in h\Z^n
		\end{array}}}
\begin{document}
\begin{center}
  {\Large Approximation of fracture energies with $p$-growth
  via piecewise affine finite elements}\\[5mm]
{\today}\\[5mm]
Sergio Conti$^{1}$, Matteo Focardi$^{2}$, and Flaviana Iurlano$^{3}$\\[2mm]
{\em $^{1}$
 Institut f\"ur Angewandte Mathematik,
Universit\"at Bonn\\ 53115 Bonn, Germany}\\[1mm]
{\em $^{2}$ DiMaI, Universit\`a di Firenze\\ 50134 Firenze, Italy}\\[1mm]
{\em $^{3}$ Laboratoire Jacques-Louis Lions, Université Paris 6\\ 75005 Paris, France}\\[3mm]
    \begin{minipage}[c]{0.8\textwidth}
    The modeling of fracture problems within geometrically linear elasticity is often based on the space of generalized functions of bounded deformation $GSBD^p(\Omega)$, 
    {$p\in(1,\infty)$,} their treatment is however hindered by the very low regularity of those functions and by the lack of appropriate density results.
We construct here  an approximation of $GSBD^p$ {functions}, {for $p\in(1,\infty)$}, with functions which are Lipschitz continuous away from a jump set which is a finite union of closed subsets of $C^1$ hypersurfaces. 
The strains
of the approximating functions converge strongly {in $L^p$ to the strain of the target,}
and the area of the{ir jump sets 
converge to the area of the target}.
The key idea is to use piecewise affine functions on a suitable grid, which is obtained via the {Freudenthal} partition of a cubic grid.
    \end{minipage}
\end{center}

\section{Introduction}

The modeling of plasticity and fracture in a geometrically linear framework leads to vectorial variational problems 
in which the local energy depends on the symmetric part of the deformation gradient, and the deformation can jump in a set of finite $(n-1)$-dimensional measure
\cite{Suquet1978a,Temam1983,FrancfortMarigo1998}.
If one assumes that the total variation of the distributional symmetrized gradient is controlled by the energy then one deals with functions of bounded deformation,
which are defined as the functions $u\in L^1(\Omega;\R^n)$ such that the distributional strain $Eu:={\frac 12}(Du+Du^T)$ is a bounded measure 
\cite{TemamStrang1980,Temam1983,AnzellottiGiaquinta1980,KohnTemam1983,AmbrosioCosciaDalmaso1997}. Here $\Omega\subset\R^n$ is an open set, and $BD(\Omega)$ denotes the set of functions of bounded deformation on $\Omega$.

In fracture problems one often deals with the {proper subspace} $SBD^p(\Omega)$, which is characterized by the fact that the distributional strain $Eu$ is the sum of an elastic part {$e(u)\calL^n\res\Omega$, with $e(u)\in L^p(\Omega;\R^{n\times n}_\sym)$}, and a {singular part $[u]\odot \nu_u \calH^{n-1}\LL J_u$} concentrated on a {$(n-1)$-rectifiable} set of finite $(n-1)$-dimensional measure,
{with $\nu_u$ the approximate normal and $[u]$ the jump of the traces of $u$ 
across $J_u$} (see \cite{FrancfortMarigo1998,BellettiniCosciaDalmaso1998,Chambolle2003,BourdinFrancfortMarigo2008}).
Typical fracture models, such as Griffith's model, do not, however, give control of the amplitude of the jump of $u$ over the discontinuity set; a typical energy
takes the form 
\begin{equation}\label{eqGriff}
\int_\Omega f(e(u)) dx + \calH^{n-1}(J_u), 
\end{equation}
 which is the natural vectorial generalization of the Mumford-Shah functional {in} linear elasticity. The function $f$ is assumed to be convex and to have $p$-growth at infinity.
One is then lead to compactness results in the space $GSBD^p(\Omega)$, which was introduced by Dal Maso in \cite{gbd} and {recalled} in Section \ref{secprelim} below. 

{In the study of {problems modeled} in $SBD^p$ or $GSBD^p$ {(see for example \cite{hutchinson1989course})} it is crucial to have}
good approximation results {for functions in those spaces}. 
On the one hand, smooth functions are dense in $BD(\Omega)$ in the strict topology, which entails 
weak convergence of the distributional strains. This is clearly not enough to ensure continuity of
{the energy in (\ref{eqGriff}) along such approximating sequences}.  Indeed, the smooth approximants (which are typically obtained by mollification) replace both the discontinuities and the $L^p$ strain $e(u)$ by smooth components, mixing fracture and elastic deformation. It is apparent that this will, in general, increase significantly the energy.
In the scalar case {from the point of view of applications to fracture mechanics,
the functional setting for the problem is provided by $SBV^p(\Omega;\R^n)$ functions, and} 
a density result which guarantees separate convergence of the two terms in (\ref{eqGriff}) was obtained by Cortesani and Toader \cite{CortesaniToader1999}. {The} approximants are still discontinuous, but the jump set has become regular  (a finite union of simplexes) and each function is regular away from the jump set. The gradients converge strongly away from the jump set, and the jumps and the orientation of the jump set converge. 
{More generally, in such a restricted framework one can allow the domain and the codomain to have different dimensions, in what follows we 
shall limit to comment the case of interest in this paper}.

In the vector-valued case, and restricting to energies {with} 
quadratic growth, density of regular functions in ${SBD^2(\Omega)\cap L^2(\Omega;\R^n)}$ was proven by Chambolle in 2004 for $n=2$ \cite{Chambolle2004} and then for $n\ge 3$ \cite{Chambolle2005}. His proof was extended to ${GSBD^2(\Omega)\cap L^2(\Omega;\R^n)}$ by Iurlano \cite{iur12}. Their result shows that any $u\in {GSBD^2(\Omega)\cap L^2(\Omega;\R^n)}$ can be approximated by functions which are 
continuous away from a finite union of closed pieces of $C^1$ hypersurfaces,  are Lipschitz continuous away from this set, with strong convergence of the strains and, in an appropriate sense, of the discontinuities. This permits to obtain convergence of  energies of the type 
(\ref{eqGriff}), as long as $f$ has quadratic growth, and of more general functionals where the surface term has the form $\int_{J_u} \varphi(u^+, u^-, x, \nu)d{\calH^{n-1}(x)}$ for a suitable 
surface energy density $\varphi:\R^n\times \R^n\times\Omega\times S^{n-1}\to\R$. For a discussion of the related problem of density for partition problems we refer to \cite{BraidesContiGarroni2017}.

The restriction of the mentioned results of \cite{Chambolle2004,Chambolle2005,iur12}  to the quadratic energies does not originate from simplicity of presentation, but is instead a consequence of  the type of construction used. 
Indeed, the key idea, first introduced in \cite{Chambolle2004}, is to replace the function $u$ by a componentwise linear approximation on a suitably chosen (very fine) cubic grid, and then to remove the cubes which intersect, in a suitable sense, the jump set. The fact that the energy is quadratic permits an explicit integration of the energy density in each cube, and leads to the identification of the continuum energy of the approximation with a discrete energy, which consists of sums of squares of difference quotients along the edges of the grid. In turn, for a suitable choice of the grid this discrete energy approximates the original energy. For nonquadratic expressions the first step, in which one integrates explicitly over a unit cell, breaks down. Estimates are of course still possible, but the result will only hold up to a $p$-dependent factor, even in the easy case where the functions are smooth to start with.
Therefore we use a different strategy, and resort to a piecewise affine interpolation on a suitable refinement of the grid, see discussion in Section \ref{secmain} below.

Our result permits to replace functions $GSBD^p{(\Omega) \cap L^p(\Omega;\R^n)}$ with much more regular functions in a number of problems related to fracture {(see for example \cite{ContiFocardiIurlano-CRAS,ContiFocardiIurlano-Ex2,CC,ChambolleContiFrancfort2017})}. 
{After the completion of this work, Chambolle and Crismale in \cite{CC} have extended our main result to 
all functions in $GSBD^p(\Omega)$ by adopting a different technique. We stress that the extra integrability hypothesis that we impose 
on the relevant function is often not meaningful for problems in fracture mechanics.} 

{Together with the elliptic regularity results for solutions to 
linear elasticity type systems established in \cite{ContiFocardiIurlanoRegularity},}
 our result has been instrumental for the proof of existence {in dimension $n=2$} of minimizers for {the strong counterpart of} the Griffith functional in (\ref{eqGriff}), that was presented in  \cite{ContiFocardiIurlano-CRAS,ContiFocardiIurlano-Ex2}. 
More precisely, in \cite{ContiFocardiIurlano-Ex2} it is proved that any local minimizer 
$u$ of \eqref{eqGriff} has relatively closed jump set, i.e. 
$\calH^1(\overline{J_u}\cap\Omega\setminus J_u)=0$, and it is smooth outside it, namely 
$u\in C^{1,\alpha}(\Omega\setminus\overline{J_u};\R^2)$ for some $\alpha\in(0,1)$.
The equivalence between the weak formulation of the problem as stated in \eqref{eqGriff}
and the classical strong form then follows (cf. \cite{ContiFocardiIurlano-CRAS,ContiFocardiIurlano-Ex2} for more details).

{Such a mild regularity result extends the analogous statement for $SBV^p$ functions 
proved in the celebrated paper \cite{DegiorgiCarrieroLeaci1989} by De Giorgi, Carriero 
and Leaci, corresponding in applications to the (generalized) antiplane shear setting.   
As already mentioned before, in \cite{ContiFocardiIurlano-CRAS,ContiFocardiIurlano-Ex2} 
the strong approximation property established in this paper is used to infer such 
kind of regularity; viceversa the quoted approximation result by Cortesani and Toader 
\cite{CortesaniToader1999} uses De Giorgi, Carriero and Leaci's regularity result {(in particular by means of 
\cite[Lemma 5.2]{BraidesChiadoPiat} by Braides and Chiadò Piat)} as a key tool to prove the strong approximation property. 

In closing this Introduction we mention a complementary approach to the regularity of $SBD^p$ and $GSBD^p$ functions, which has received a lot of attention in the last years, namely, the proof of rigidity estimates for functions with small jump set. A Korn-Poincar\'e bound in term of the elastic energy alone was proven for 
$SBD^p$ functions in \cite{ChambolleContiFrancfort2016}. An improved estimate, which controls also the gradients, was obtained in the two-dimensional case in \cite{Friedrich1,Friedrich2,ContiFocardiIurlano2017IntRepr}.

{Finally, we summarize the structure of the paper. 
Section \ref{secprelim} is devoted to fix the notation for the piecewise affine finite 
elements and the functional spaces which are involved in our main approximation 
result, Theorem \ref{densitytheorem}, that we shall prove in Section \ref{secmain}.}

\section{Notation}\label{secprelim}
One key ingredient of our piecewise affine approximation is  the {Freudenthal} partition of the $n$-cube $[0,1]^n$. We say that the vertex $(i_1,\dots,i_n)$, $i_k\in\{0,1\}$, precedes the vertex $(j_1,\dots,j_n)$, $j_k\in\{0,1\}$, if $i_k\leq j_k$ for all $k$. The convex hull of a chain of $n+1$ {distinct vertices} is a $n$-simplex. 
Then the {Freudenthal} partition $\scrS$ of the $n$-cube is defined as the set of all $n$-simplexes obtained through maximal chains {of ordered vertices} connecting the origin to the vertex $(1,\dots,1)$ (see Figure 1). Such partition counts $n!$ simplexes.
\begin{figure}[htbp]
\centering
{\includegraphics[height=5cm]{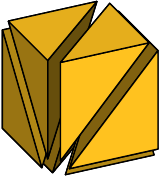}} 
\caption{Freudenthal partition of $[0,1]^3$: the origin is the vertex on the bottom in front, (1,1,1) is at the top in the back.}  
\end{figure}  

Alternatively, for any permutation $\sigma$ of $\{1,\dots, n\}$ one defines a simplex $S_\sigma$ as the convex envelope of the points $v_i:=\sum_{j\le i} e_{\sigma(j)}$, $i=0,1,\dots, n$. Explicitly one obtains
\begin{align*}
S_\sigma:=&\{ \sum_i \lambda_i \sum_{j\le i} e_{\sigma(j)} : \lambda_i\ge 0, \sum\lambda_i=1\}\\
=&\{\sum_j e_{\sigma(j)} \sum_{i\ge j} \lambda_i : \lambda_i\ge 0, \sum\lambda_i=1\}.
\end{align*}
It is then apparent
that $S_\sigma$ consists of the points $x\in[0,1]^n$ such that $j\mapsto x_{\sigma(j)}$ is nonincreasing. Therefore the sets $S_\sigma$ have disjoint interiors and cover $[0,1]^n$. 
{They differ only by a permutation of the components, hence they are congruent and each has volume $1/n!$.}

{We use standard notations for the space $BV$ and its subspaces $SBV^p$
always referring to the book \cite{AmbrosioFuscoPallara} for details.}

{As already mentioned in the introduction $BD(\Omega)$ is the space of functions
$u\in L^1(\Omega;\R^n)$ for which the symmetrized distributional strain 
$Eu=\frac12(Du+Du^T)$ is a Radon measure.
The subspace $SBD^p(\Omega)$, $p\geq 1$, contains all functions $u\in BD(\Omega)$ for which 
\[
Eu=e(u)\calL^n\res\Omega
+(u^+-u^-)\odot\nu_u\calH^{n-1}\res J_u, 
\]
with $e(u)\in L^p(\Omega;\R^{n{\times} n})$ and $\calH^{n-1}(J_u)<\infty$
(cf. \cite{AmbrosioCosciaDalmaso1997,BellettiniCosciaDalmaso1998}). 
}

Given $u\in L^1(\Omega;\R^n)$, for $\Omega\subset\R^n$ open, $\xi\in S^{n-1}$ 
and $y\in\R^n$
one defines the slice $u^\xi_y:\Omega^\xi_y\to\R$ by
${u^\xi_y(t)=u(y+t\xi)\cdot\xi}$, where $\Omega^\xi_y:=\{t\in\R: y+t\xi\in\Omega\}$. If $u\in BD(\Omega)$ one can show that ${u^\xi_y}\in BV(\Omega^\xi_y)$ for almost every $y$.
One denotes with $\Omega^\xi:=(\Id-\xi\otimes \xi)\Omega$ the set of ``relevant'' values of $y$, i.e., the set of $y\in \R^n$ such that $y\cdot\xi=0$ and ${(y+\R\xi)}\cap \Omega\ne\emptyset$.

{An $\mathcal{L}^n$-measurable function $u\colon\Omega\to\mathbb{R}^n$ belongs to $GSBD(\Omega)$ if there exists a bounded positive Radon measure $\lambda_u\in\mathcal{M}_b^+(\Omega)$
such that the following condition holds for every $\xi\in\mathbb{S}^{n-1}$: 
for $\mathcal{H}^{n-1}$-a.e.\ $y\in\Omega^\xi$ {the function {$u^\xi_y(t)=u(y+t\xi)\cdot\xi$}}
belongs to $SBV_\loc(\Omega^\xi_y)$, where $\Omega^\xi_y:=\{t\in\R: y+t\xi\in\Omega\}$, and for every Borel set $B\subset \Omega$ it satisfies
\begin{equation*}
\int_{{\Omega}^\xi}\Big(|Du^\xi_y|(B^\xi_y\setminus J^1_{u^\xi_y})+\mathcal{H}^0(B^\xi_y\cap J^1_{u^\xi_y})\Big)d\mathcal{H}^{n-1}\leq
\lambda_u(B),
\end{equation*}
where $J^1_{u^\xi_y}:=\{t\in J_{u^\xi_y}:|[u^\xi_y](t)|\geq1\}$.}

{If $u\in GSBD(\Omega)$, the aforementioned quantities $e(u)$ and $J_u$ are still well-defined, and are respectively integrable and rectifiable in the previous sense.}
Moreover for every $\xi\in \mathbb{S}^{n-1}$ and for $\Hn$-a.e.\ $y\in \Omega^\xi$ we have
\begin{equation}\label{eqslicepr}
J_{ u^\xi_y}\subset (J_u)^\xi_y\quad\textrm{and}\quad
e(u)(y+t\xi)\xi\cdot\xi=  (u^\xi_y)'(t) \ \text{ a.e.\ $t$ in }\Omega^\xi_y, 
\end{equation}
{where $(u^\xi_y)'$ denotes the absolutely continuous part of the distributional derivative.}
{In analogy to $SBD^p(\Omega)$, the subspace $GSBD^p(\Omega)$ includes all functions in $GSBD(\Omega)$ satisfying $e(u)\in L^p(\Omega;\R^{n{\times} n})$ and $\calH^{n-1}(J_u)<\infty$ (cf. \cite{gbd}).}

\section{The main result}
\label{secmain}
\begin{theorem}\label{densitytheorem}
	Let $\Omega\subset\R^n$ be a bounded Lipschitz set and let {$p>1$.}
	Given $u\in GSBD^p{(\Omega)}\cap L^p(\Omega;\R^n)$,
	there exists a sequence $(u_{{j}})\subset SBV^{{p}}\cap L^\infty(\Omega;\R^n)$ such that each $J_{u_{{j}}}$ is contained in the union $S_{{j}}$ of a finite number of closed connected pieces of  ${C}^1$-hypersurfaces, $u_{{j}}\in W^{1,\infty}(\Omega\setminus S_{{j}};\mathbb{R}^n)$, and the following properties hold:
	\flushleft
	
	\begin{itemize}
		\item[\rm ($1$)] $\|u_{{j}}-u\|_{{L^p(\Omega,\R^n)}}\to0$;

		\item[\rm ($2$)] $\|e(u_{{j}})-e(u)\|_{{L^p(\Omega,\R^{n\times n})}}\to0$;
		
		\item[\rm ($3$)] $\Hn(J_{u_{{j}}})\to\Hn(J_u)$.
	\end{itemize}
\end{theorem}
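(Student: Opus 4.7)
My approach is to build $u_j$ as a piecewise affine interpolant of $u$ on the Freudenthal subdivision of a cubic grid of side $\eps_j\to 0$, modified on the simplices that meet the jump set. Fix $\eps>0$ and a shift $a\in[0,\eps)^n$. Triangulate $\R^n$ by applying $\scrS$ to every cube of $a+\eps\Z^n$. At each vertex $\xi$ define an averaged value $u_\eps(\xi)\in\R^n$ (for instance the mean of $u$ on a ball of radius $\eps/10$, since $u\in L^p$ need not have pointwise values), and let $w_\eps^a$ be the continuous piecewise affine function on the triangulation taking these values at the vertices. Call a simplex \emph{bad} if along at least one of its edges---which lie in finitely many fixed directions $\xi_r$ built from the basis vectors and their partial sums---the slice $u^{\xi_r}_y$ through that edge has a jump point of size exceeding a small threshold $\delta$, and \emph{good} otherwise. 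Define $u_\eps^{a,\delta}$ by keeping $w_\eps^a$ on good simplices and resetting it on the bad ones to a convenient extension (e.g.\ zero, or the value on an adjacent good simplex), so that $J_{u_\eps^{a,\delta}}$ lies in a finite union of closed $(n-1)$-faces of the triangulation, each a polygonal piece of a hyperplane and therefore of a $C^1$ hypersurface.

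\textbf{Strong $L^p$ convergence of the strains.} On a good simplex $S_\sigma$, with ordered vertices $\xi_0,\dots,\xi_n$ satisfying $\xi_j-\xi_{j-1}=\eps e_{\sigma(j)}$, the constant gradient of $w_\eps^a$ is fixed by
\[
\nabla w_\eps^a\,e_{\sigma(j)}=\frac{u_\eps(\xi_{j})-u_\eps(\xi_{j-1})}{\eps},\qquad j=1,\dots,n.
\]
Taking the scalar product with $e_{\sigma(j)}$ and using the slicing identity \eqref{eqslicepr}, the diagonal entry $(\sym\nabla w_\eps^a)_{\sigma(j)\sigma(j)}$ equals a one-dimensional average of $e(u)e_{\sigma(j)}\cdot e_{\sigma(j)}$ along the edge, since by construction the slice crosses no significant jump. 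Varying $\sigma$ over all $n!$ orientations recovers every diagonal entry of $e(u)$, and the polarization identity $A_{ij}=\tfrac12(A(e_i+e_j,e_i+e_j)-A_{ii}-A_{jj})$ applied to the extra edges $e_i+e_j$ (which are edges of some Freudenthal simplex) recovers the off-diagonal entries. Fubini combined with the strong $L^p$-continuity of translations then gives $\sym\nabla w_\eps^a\to e(u)$ in $L^p$ of the good region, and equi-integrability of $|e(u)|^p$ together with smallness of the bad region extends the convergence to all of $\Omega$. The $L^p$ convergence $u_\eps^{a,\delta}\to u$ follows along the same lines.

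\textbf{Jump set and main obstacle.} For the upper bound on $\Hn(J_{u_\eps^{a,\delta}})$ I would average over $a\in[0,\eps)^n$: for each of the finitely many edge directions $\xi_r$, a Fubini/slicing argument for $GSBD^p$ bounds the expected area of bad faces transverse to $\xi_r$ by $\int_{J_u}|\nu_u\cdot\xi_r|\,d\Hn$, up to an error controlled by $\delta$ and $\eps$. Selecting a good $a$ by a Chebyshev-type argument and sending first $\eps\to0$ and then $\delta\to0$ yields $\limsup_j\Hn(J_{u_j})\le\Hn(J_u)$, while the matching $\liminf$ follows from the $GSBD^p$ lower semicontinuity of the jump measure along $u_j\to u$ with $e(u_j)\to e(u)$ in $L^p$ (cf.\ \cite{gbd}); a diagonal extraction then delivers the sequence required by the theorem. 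The principal obstacle is precisely this delicate calibration: unlike in the quadratic case of \cite{Chambolle2004,iur12}, where the $p$-power of the interpolant's energy could be integrated exactly on a cube and identified with a single discrete quantity, here one must simultaneously tune the grid shift $a$, the jump threshold $\delta$, and the vertex-averaging radius so that both the strong $L^p$ convergence of strains and the sharp limit $\Hn(J_{u_j})\to\Hn(J_u)$ hold. The Freudenthal partition is the device that makes this possible, because the strain of the affine interpolant on each simplex reduces to a one-dimensional difference quotient of $u$ along the edges, eliminating the $p$-dependent multiplicative losses intrinsic to a cell-based quadratic reformulation.
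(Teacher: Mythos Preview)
Your construction captures the right idea --- piecewise affine interpolation on a Freudenthal grid, averaging over the shift, slicing to convert edge difference quotients into line integrals of $e(u)$ --- and this is indeed what the paper does in Theorem~\ref{t:badcost}. But there is a genuine gap in the surface estimate, and a secondary sloppiness in the strain estimate.

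\textbf{The surface bound is not sharp.} Your averaging over $a\in[0,\eps)^n$ yields, for each edge direction $\xi_r$, a bound of the form $\int_{J_u}|\nu_u\cdot\xi_r/|\xi_r||\,d\Hn$ on the expected number of edges in direction $\xi_r$ that cross $J_u$. Summing over all edge directions and multiplying by the face area per bad simplex gives
\[
\limsup_{j}\Hn(J_{u_j})\;\le\; c_1\,\Hn(J_u)
\]
with $c_1>1$ a dimensional constant (in the paper $c_1=2^{n-1}n^{5/2}(n+1)n!$). Sending $\delta\to 0$ does not improve this constant: $\delta$ governs only which jumps are detected, not the geometric overcounting that comes from each bad edge generating several bad simplices, each with several faces. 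So your claim that $\limsup_j\Hn(J_{u_j})\le\Hn(J_u)$ follows from Chebyshev plus $\eps,\delta\to 0$ is incorrect. The paper obtains only the bound with $c_1$ at this stage (Theorem~\ref{t:badcost}), and then needs a separate Besicovitch covering argument (Theorem~\ref{t:goodcost}): one covers a large compact subset of $J_u$ by disjoint balls in which $J_u$ is $C^1$-close to a hyperplane, keeps those hyperplanes as the jump set there, and applies the piecewise-affine construction only on the small remainder, where the factor $c_1$ multiplies something arbitrarily small. Without this step your argument proves only $\liminf_j\Hn(J_{u_j})\ge\Hn(J_u)$ and $\limsup_j\Hn(J_{u_j})\le c_1\Hn(J_u)$, which does not give~(3).

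\textbf{The polarization identity is simplex-dependent.} On a Freudenthal simplex $S_\sigma$ the edge directions are the partial sums $e_{\sigma(i+1)}+\dots+e_{\sigma(j)}$; in particular $e_i+e_j$ is an edge of $S_\sigma$ only when $i$ and $j$ are \emph{consecutive} in $\sigma$. Hence your polarization formula for $A_{ij}$ via the edge $e_i+e_j$ is unavailable on most simplices. What is true is that $\{\tilde\nu\otimes\tilde\nu:\tilde\nu\in D_S\}$ is a basis of $\Msym$ for every $S\in\scrS$, so each entry of $e(w_\eps^a)$ on $S$ is a fixed linear combination of the $\binom{n+1}{2}$ edge difference quotients of $S$ --- but the combination depends on $S$. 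This is exactly why the paper introduces the coefficients $\alpha_j^{A,S}$ and runs the estimate simplex by simplex. Your argument can be repaired along these lines, but not with a single polarization identity.

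Finally, the paper does not prove $e(u_j)\to e(u)$ strongly in $L^p$ by continuity of translations alone: it first obtains the energy upper bound $\limsup_j\int_\Omega W(e(u_j))\le\int_\Omega W(e(u))$, then invokes $GSBD^p$ compactness to get $e(u_j)\rightharpoonup e(u)$ weakly, and concludes strong convergence from strict convexity of $W$. Your direct route may be workable, but it needs a careful statement about $L^p$ convergence of the operator ``constant on each simplex, equal to the edge-average of the integrand'' that you have not supplied.
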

{\begin{remark}
The sequence $(u_{{j}})$ in Theorem \ref{densitytheorem} can be constructed in a way that it satisfies in addition
\begin{eqnarray*}
&	\Hn(J_{u_j}\triangle J_u)\to0,\\
&\displaystyle\int_{J_{u_j}\cup J_u}\big(|u_j^\pm-u^\pm|\wedge 1\big) d\Hn\to0.	
\end{eqnarray*}	
These further properties can be obtained by following the arguments in \cite{iur12} step-by-step, with obvious modifications due to the fact that the proof there 
is written for $p=2$. In this respect, since only a technical effort is required, we focus here on the main difficulties and we prove Theorem \ref{densitytheorem} in the stated form.
\end{remark}
\begin{remark}By combining Theorem \ref{densitytheorem} and \cite[Theorem 3.1]{CortesaniToader1999} by Cortesani and Toader, 
it is possible to obtain a sequence of approximating functions whose jump set is polyhedral, namely the intersection of $\Omega$ with the union of a finite number of $(n-1)$-dimensional simplexes
compactly contained in $\Omega$. If $p\in(1,2]$ the result can be even improved by taking the $(n-1)$-simplexes pairwise disjoint (see \cite[Remark 3.5]{CortesaniToader1999} and \cite[Section 4, Proof of Corollary 3.11]{cort}).
\end{remark}
}
As mentioned in the Introduction the proof of Theorem \ref{densitytheorem} follows the general strategy of Chambolle and Iurlano \cite{Chambolle2004, Chambolle2005,iur12}, but uses a different interpolation scheme and a different finite-element grid
for the actual construction. {Indeed}, we first construct a sequence of ${SBV^p\cap L^{\infty}}$-functions converging to a given $u\in GSBD^p(\Omega)$, in a way that the bulk estimate is sharp and the surface estimate is obtained up to a 
{multiplicative factor}.
Each approximating function is a piecewise linear interpolation outside from a finite number of cubes, where it is set equal to $0$. Considering piecewise linear interpolations {is essential {in order}
to treat the case {of maps in} $GSBD^p(\Omega)$ with $p\neq 2$. It is the main difference with 
the mentioned references \cite{Chambolle2004, Chambolle2005,iur12} which
deal with the quadratic case $p=2$. Indeed, in \cite{Chambolle2004, Chambolle2005,iur12} 
piecewise polynomial interpolations (of degree equal to the dimension of the space) are employed. 
{S}uch approximations in dimension higher than $3$ if $p\neq 2$ would give rise to a multiplicative factor in the bulk estimate (cf. with \cite[Lemma A.1]{Chambolle2004}), so that the strong approximation 
property would fail.} The piecewise polynomial{s} correspond to a componentwise affine interpolation, that can be done directly on a cubic grid. In the $p\ne 2$ case we need to use a piecewise affine 
interpolation, and therefore need to decompose the domain in simplexes. However, the strategy of  \cite{Chambolle2004, Chambolle2005,iur12} was based on controlling the longitudinal differen{ce} 
quotients along grid segments (the segments joining two vertices of the grid, which are edges of the elements {or diagonals of their faces}). 
{A natural approach would be to} choose an expression for the energy density which uses only these components. In dimension $n=2$ this still works, since one can decompose the square $[0,1]^2$ into two triangles whose sides have the same orientations 
(the three orientations being $(1,0)$, $(0,1)$ and $(1,1)$ for both of the triangles). {In dimension 3 and higher} this is, however, not any more possible, and the energy density will {typically}
not match the geometry of the simplex. Therefore we need to decompose each term of the energy into the components which are ``longitudinal'' with respect to the edges of the simplexes. We shall denote by $A\in\calA\subset\Msym$ the ``components'' of $e(u)$ which enter the energy, and by $\alp^{A,S}_j$ the coefficients of the decomposition of strain direction $A$ in linear combinations of longitudinal strains along the edges simplex $S$, where $j$ labels the sides of $S$. The key observation on which the construction in this paper is based is that one can perform this decomposition jointly for the continuous and for the discrete energy.

We now introduce the objects just mentioned in more detail. We fix $p\geq1$ and choose a finite set of matrices $\calA\subset \Msym$, which span $\Msym$ and are fixed for the rest of the proof.
Let $W:\Msym\to\R$ be defined by
\begin{equation}
\label{e:W}W(\xi):=\sum_{A\in\calA}|\xi :A|^p
\end{equation}
where $A:B:=\Tr A^TB=\sum_{ij}A_{ij}B_{ij}$ denotes the Euclidean scalar product on $\Msym$.
We denote by $D_S$ the set of the edges directions for a simplex {$S$ in the Freudenthal partion $\scrS$}. 
Notice that $D_S$ contains {$\sfrac{n(n+1)}2$ linearly independent} vectors and that {for any given $S$}
the set $\{e\otimes e: e\in D_S\}$ constitutes a basis for $\Msym$.
To see this, it suffices to show that if $\xi\in\Msym$ obeys $e\cdot \xi e=0$ for all $e\in D_S$ then $\xi=0$. 
{Indeed, the simplex
$S$ can be written as the convex envelope of $\{0, f_1, \dots, f_n\}$, where
$(f_i)_{i=1,\dots,n}$ is a basis of $\R^n$. The set $D_S$ is then given by the set of 
the $\pm f_i$'s and the set of all the differences $f_i-f_j$'s with $i\neq j$. Therefore, if {$\xi\in\Msym$ is such that} 
$e\cdot \xi e=0$ for all $e\in D_S$, we deduce first that $f_i\cdot \xi f_i=0$ for all $i$ by taking $e=f_i$, and then 
$f_i\cdot \xi f_{j}=0$ for all $i\neq j$ by taking 
$e=f_i-f_j$. Hence, $\xi=0$.}
We stress that $D_S$ is a set of differences of {vertices} of $S$, not a set of unit vectors.

As in the references mentioned above, the key point is to prove an approximation result that enlarges the jump set by at most a fixed factor. The sharp constant can then be recovered by applying this to the complement of a 
suitable ``large'' compact subset of $J_u$.
\begin{theorem}\label{t:badcost}
	Let $\Omega\subset\R^n$ be an open bounded set with Lipschitz boundary and let $p\geq1$. Given $u\in GSBD^p{(\Omega)}\cap L^p(\Omega;\R^n)$,
	there exists a sequence $(u_{{j}})\subset SBV^{{p}}\cap L^\infty(\Omega;\R^n)$ such that each $J_{u_{{j}}}$ is contained in the union $\Sigma_{{j}}$ of a finite number {of $(n-1)$-dimensional faces of closed simplexes}, $u_{{j}}\in W^{1,\infty}(\Omega\setminus \Sigma_{{j}},\mathbb{R}^n)$, and the following properties hold:
	\flushleft
	
	\begin{itemize}
		\item[\rm ($1$)] $\|u_{{j}}-u\|_{{L^p(\Omega,\R^n)}}\to0$;
		\item[\rm ($2$)] for a positive constant $c_1$ {depending only on $n$ and $p$}
		\[\limsup_{{{j}}\to\infty}\Big(\int_\Omega W(e(u_{{j}}))\,dx+\Hn(\Sigma_{{j}})\Big)\leq\int_\Omega W(e(u))\,dx+c_1\Hn(J_u).
		\]
	\end{itemize}
\end{theorem}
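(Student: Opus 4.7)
The strategy is to implement the Chambolle--Iurlano scheme \cite{Chambolle2004,Chambolle2005,iur12} in a piecewise affine, simplicial form on the Freudenthal refinement of a cubic grid. Fix $\eps>0$ and a translation $\tau\in[0,\eps)^n$ (to be selected by an averaging argument). Let $\calG_\eps$ be the Freudenthal refinement of $\eps\Z^n+\tau$ intersected with $\Omega$, and define vertex values $u^\eps_\xi\in\R^n$ as an $L^\infty$-truncated average of $u$ on a small neighbourhood of each grid vertex $\xi$; let $v_\eps$ be the continuous piecewise affine interpolant of the $u^\eps_\xi$'s on $\calG_\eps$. Declare a simplex $S\in\calG_\eps$ \emph{bad} if an edge slice of $u$ along $S$ meets a point of $J^1_{u^{e/|e|}_{y}}$ or exhibits an oscillation above a preassigned threshold, and reset $v_\eps\equiv 0$ on every bad simplex. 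Then $v_\eps\in SBV^p\cap L^\infty(\Omega;\R^n)$ with $J_{v_\eps}$ contained in the union $\Sigma_\eps$ of the $(n-1)$-faces separating good from bad simplexes, and $L^p$-convergence $v_\eps\to u$ follows from standard Lebesgue-point arguments combined with the smallness of the bad set.

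For the bulk estimate, on a good simplex $S$ and an edge $e=\xi'-\xi\in D_S$ one has $e\cdot e(v_\eps|_S)\,e=(u^\eps_{\xi'}-u^\eps_\xi)\cdot e$ by piecewise affineness. Since $\{e\otimes e:e\in D_S\}$ is a basis of $\Msym$, every $A\in\calA$ decomposes as $A=\sum_{e\in D_S}\alp^{A,S}_e\,e\otimes e$, and the \emph{same} decomposition applied to both sides, via \eqref{eqslicepr}, gives
\begin{equation*}
A:e(v_\eps|_S)=\sum_{e\in D_S}\alp^{A,S}_e\,(u^\eps_{\xi'}-u^\eps_\xi)\cdot e,\qquad A:e(u)(x)=\sum_{e\in D_S}\alp^{A,S}_e\,|e|^2\,(u^{e/|e|}_{y_e})'(t_e),
\end{equation*}
where $(y_e,t_e)$ are the slice coordinates of $x$ in direction $e/|e|$. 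Jensen's inequality along $[\xi,\xi']$ on jump-free slices dominates $|(u^\eps_{\xi'}-u^\eps_\xi)\cdot e|^p$ by $|e|^{p-1}\int_0^{|e|}|(u^{e/|e|}_{y})'(t)|^p\,dt$ up to a universal constant; summing over $A\in\calA$, over simplexes, and averaging over $\tau\in[0,\eps)^n$ via Fubini then yields $\limsup_{\eps\to 0}\int_\Omega W(e(v_\eps))\,dx\le\int_\Omega W(e(u))\,dx$. For the surface part, each bad simplex is triggered by a jump or oscillation event on one of its $\sfrac{n(n+1)}{2}$ edge lines; counting such events via integral geometry and projecting onto the orthogonal complement of the corresponding direction yields $\Hn(\Sigma_\eps)\le c_1\Hn(J_u)+o(1)$ for a geometric constant $c_1=c_1(n,p)$.

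The main obstacle is obtaining the bulk bound with no multiplicative constant, which forces the simultaneous decomposition $A=\sum_e\alp^{A,S}_e\,e\otimes e$ for every simplex --- feasible only because the Freudenthal edges span $\Msym$. This is precisely why componentwise multilinear interpolation (which worked in \cite{Chambolle2004,Chambolle2005,iur12} for $p=2$, where the slice identity is quadratic) must here be replaced by piecewise affine interpolation on a simplicial grid. A secondary technical point is the averaging over $\tau$: one must verify that for a suitable translation the discrete bulk energy converges to the continuous one and that the integral-geometric counting of bad simplexes closes up against $\Hn(J_u)$, both arguments hinging delicately on the $GSBD^p$-slicing structure \eqref{eqslicepr}. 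A diagonal extraction in $\eps$ then produces the sequence $(u_j)$.
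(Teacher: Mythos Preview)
Your overall scheme matches the paper's: Freudenthal refinement of a cubic grid, piecewise affine interpolation, the decomposition $A=\sum_j\alpha_j^{A,S}\,\tilde\nu_j\otimes\tilde\nu_j$ applied simultaneously to the discrete and to the continuous strain, and averaging over the translation. Two points, however, need correction.

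First, the bulk step cannot proceed as you describe it. Bounding each edge term $|(u^\eps_{\xi'}-u^\eps_\xi)\cdot e|^p$ \emph{separately} via Jensen and then ``summing over $A$'' does not recover $|A:e(v_\eps)|^p=\bigl|\sum_e\alpha_e^{A,S}(\ldots)_e\bigr|^p$ without a multiplicative loss: there is no way to pass from term-wise $p$-th power bounds back to the $p$-th power of a linear combination with constant one. The paper's route is different. On a jump-free edge the longitudinal difference quotient \emph{equals} the average of the slice derivative, so the full linear combination $\sum_j\alpha_j^{A,S}\triangle^S_{j,h}$ is kept intact inside $|\cdot|^p$, rewritten as $\sum_j\alpha_j^{A,S}\,\frac{1}{h|\tilde e_j|}\int_0^{h|\tilde e_j|}(\hat u^{\tilde\nu_j}_z)'\,dt$, and one then lets $h\downarrow0$ using Lebesgue differentiation and dominated convergence to obtain $|e(\hat u):A|^p$ pointwise, hence $\int_{\hat\Omega} W(e(\hat u))\,dx$ in the limit. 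No Jensen step enters, and it is precisely this exact identity (rather than an inequality) that yields the constant one in the bulk.

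Second, you omit the extension step (Lemma~\ref{l:exte}): before anything else, $u$ is replaced by $\hat u\in GSBD^p(\hat\Omega)\cap L^p(\hat\Omega;\R^n)$ on a larger domain $\hat\Omega\supset\supset\Omega$, with energy increase at most $\eps$. This is what makes the interpolation well defined on simplexes meeting $\partial\Omega$ and what allows the use of \emph{point values} $\hat u(\xi+hy)$ at the vertices (well defined after averaging in $y$) rather than the ``truncated averages'' you propose; the latter would replace the edge-slice identity above by a double average and destroy the exact equality on which the constant-one bulk bound rests. Your additional ``oscillation threshold'' in the bad-simplex criterion is likewise unnecessary once the extension is in place: in the paper a cube is bad precisely when $J_{\hat u}$ meets one of its edge segments, and those events are what is counted against $\calH^{n-1}(J_{\hat u})$ via the integral-geometric estimate.
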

In order to prove Theorem \ref{t:badcost} we need a preliminary lemma, whose proof is entirely similar to \cite[Lemma 3.2]{Chambolle2004}, \cite[Lemma 3]{iur12} {and therefore not repeated here}. The given function $u$ is replaced by another $GSBD^p$-function close in energy to $u$ and defined in a larger set.

\begin{lemma}\label{l:exte}
	Let $\Omega\subset\R^n$, $n\geq2$, be open bounded with Lipschitz boundary, and let $p\geq1$. Given $u\in GSBD^p{(\Omega)}\cap L^p(\Omega;\R^n)$ and $\varepsilon>0$ there exists an open bounded set with Lipschitz boundary $\hat{\Omega}\supset\supset\Omega$ and a function $\hat{u}\in GSBD^p{(\hat{\Omega})}\cap L^p(\hat{\Omega};\mathbb{R}^n)$, such that 
	the following hold 
	\flushleft
	\begin{itemize}
		\item[\rm ($1$)] $\displaystyle ||\hat{u}-u||_{L^p(\Omega,\R^n)}<\varepsilon$,	
		\item[\rm ($2$)] $\displaystyle\int_{\hat{\Omega}}|e(\hat{u})|^p \,dx\leq \int_{\Omega}|e(u)|^p\,dx+\varepsilon$,	
		\item[\rm ($3$)] $\displaystyle\mathcal{H}^{n-1}(J_{\hat{u}})\leq \mathcal{H}^{n-1}(J_{u})+\varepsilon$.	
		\end{itemize} 
		\end{lemma}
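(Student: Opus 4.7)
The plan is to construct $\hat u$ by extending $u$ across $\partial\Omega$ via local reflection, exploiting the Lipschitz regularity of the boundary. By compactness, cover $\partial\Omega$ by finitely many open balls $V_1,\dots,V_N$ in each of which, after a rigid motion, $\partial\Omega\cap V_i$ is the graph of a Lipschitz function $f_i\colon\R^{n-1}\to\R$ in some coordinate direction. On each $V_i$ define a bi-Lipschitz reflection $\Phi_i\colon V_i\to V_i$, in local coordinates $\Phi_i(x',x_n)=(x',2f_i(x')-x_n)$, which fixes $\partial\Omega\cap V_i$ pointwise and swaps $V_i\cap\Omega$ with $V_i\setminus\overline\Omega$. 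For $\delta>0$ small set $\hat\Omega:=\{x\in\R^n:\dist(x,\Omega)<\delta\}$, a Lipschitz enlargement of $\Omega$ with $\hat\Omega\setminus\overline\Omega\subset\bigcup_i V_i$, and partition this collar into disjoint Borel pieces $W_1,\dots,W_N$ with $\overline{W_i}\subset V_i$.

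Next define $\hat u:=u$ on $\Omega$ and $\hat u:=u\circ\Phi_i$ on $W_i$. To verify $\hat u\in GSBD^p(\hat\Omega)\cap L^p(\hat\Omega;\R^n)$ I would use the slicing characterization of $GSBD^p$ recalled in Section~\ref{secprelim}: for every $\xi\in S^{n-1}$ and $\Hn$-a.e.\ line $y+\R\xi$, the one-dimensional slice of $\hat u$ is $SBV_\loc$ with the correct bound, because each $\Phi_i$ is bi-Lipschitz and maps straight lines into Lipschitz curves along which $u$ restricts to a $BV$-type function, reducible to a slice via a change of parameter. Crucially, since $\Phi_i|_{\partial\Omega\cap V_i}=\mathrm{id}$, no jumps are introduced on $\partial\Omega$: the one-sided slice limits from $\Omega$ and from $W_i$ agree on a.e.\ transverse line. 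Under this composition one has, pointwise $\calL^n$-a.e.\ on $W_i$,
\[
|e(\hat u)|\le C\,|e(u)|\circ\Phi_i,\qquad J_{\hat u}\cap W_i\subset \Phi_i^{-1}(J_u),
\]
with $C$ depending only on the Lipschitz norms of the charts.

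Items (i)--(iii) then follow by choosing $\delta$ sufficiently small. Item (i) is immediate since $\hat u\equiv u$ on $\Omega$. Item (ii) reduces to
\[
\int_{\hat\Omega\setminus\overline\Omega}|e(\hat u)|^p\,dx\le C\sum_{i=1}^N \int_{\Phi_i(W_i)}|e(u)|^p\,dx\longrightarrow 0\quad\text{as }\delta\to 0,
\]
since $\Phi_i(W_i)\subset\{x\in\Omega:\dist(x,\partial\Omega)<c\delta\}$ and $|e(u)|^p\in L^1(\Omega)$. Item (iii) follows from $\Hn(J_u\cap\{\dist(\cdot,\partial\Omega)<c\delta\})\to 0$ as $\delta\to 0$ (by finiteness of $\Hn(J_u)$) combined with the absence of new jumps on $\partial\Omega$; the $L^p$-bound on $\hat u$ itself is analogous to the one on $e(\hat u)$.

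The main obstacle is precisely the claim that the reflection does not introduce jumps along $\partial\Omega$. Because $GSBD^p$ functions admit no classical traces on $\partial\Omega$, one must argue slice by slice, exploiting that $\Phi_i$ is the identity on $\partial\Omega$ so that for a.e. line transversal to $V_i\cap\partial\Omega$ the one-dimensional restrictions of $u$ and $u\circ\Phi_i$ match at the crossing point without a jump; this hinges on the $SBV$ chain rule applied to the smooth ($C^{0,1}$) reparametrization induced by $\Phi_i$. All remaining steps are routine covering, change-of-variables and absolute-continuity bookkeeping, together with the standard composition rule for $GSBD^p$ under bi-Lipschitz maps, itself checked via slicing.
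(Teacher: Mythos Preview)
The central claim
\[
|e(\hat u)|\le C\,|e(u)|\circ\Phi_i \qquad\text{a.e.\ on }W_i
\]
is false, and this is a genuine obstruction rather than a technicality. Composition with a bi-Lipschitz map does not preserve the symmetrized gradient: if $v=u\circ\Phi$ then $\nabla v=((\nabla u)\circ\Phi)\,\nabla\Phi$, and symmetrizing this product mixes the symmetric and the antisymmetric parts of $\nabla u$. Already for the flat reflection $\Phi(x_1,x_2)=(x_1,-x_2)$ in $\R^2$, the infinitesimal rotation $u(x_1,x_2)=(x_2,-x_1)$ has $e(u)\equiv 0$, whereas $u\circ\Phi=(-x_2,-x_1)$ has $e(u\circ\Phi)\ne 0$. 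Hence $u\circ\Phi_i$ need not lie in $GSBD^p(W_i)$ at all, and even when it does, your estimate for item~(2) cannot follow. The ``standard composition rule for $GSBD^p$ under bi-Lipschitz maps'' that you invoke in the last paragraph does not exist; this is precisely the feature that distinguishes $BD$-type from $BV$-type spaces. (The modified reflection $\hat u=R\,(u\circ\Phi)$ with $R=\mathrm{diag}(1,\dots,1,-1)$ does bound $e(\hat u)$ by $e(u)\circ\Phi$, but then the normal component of $\hat u$ jumps across $\partial\Omega$, contributing $\Hn(\partial\Omega)$ to the jump set and destroying item~(3).)

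The paper does not give its own proof but refers to \cite[Lemma~3.2]{Chambolle2004} and \cite[Lemma~3]{iur12}, where the construction avoids composition entirely and uses \emph{translations}. One covers $\partial\Omega$ by finitely many cylinders $V_i$ in each of which $\Omega$ is a Lipschitz supergraph in a direction $\nu_i$, chooses a partition of unity $\psi_0,\psi_1,\dots,\psi_N$ subordinate to $\Omega,V_1,\dots,V_N$, and sets
\[
\hat u:=\psi_0\,u+\sum_{i=1}^N\psi_i\,u(\cdot+t_i\nu_i)
\]
for small $t_i>0$. Translations commute with $e(\cdot)$, so $e\big(u(\cdot+t_i\nu_i)\big)=e(u)(\cdot+t_i\nu_i)$ with no loss; the cross terms $\nabla\psi_i\odot u(\cdot+t_i\nu_i)$ are controlled using only $u\in L^p$ together with $\sum_i\nabla\psi_i=0$. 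Letting $t_i\to 0$ yields (1)--(3), and no trace argument on $\partial\Omega$ is required.
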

\begin{proof}[Proof of Theorem \ref{t:badcost}]
Fixed $u\in GSBD^p(\Omega)\cap L^p(\Omega;\R^n)$ and $\varepsilon>0$, Lemma \ref{l:exte} provides $\hat{u}$ and $\hat{\Omega}$ satisfying (1)-(3).

Fixed $y\in [0,1)^n$ and $h>0$ small, we consider the {translated} lattice $hy+\xi$, with $\xi\in h\mathbb{Z}^n$.
We introduce 
{the tubular neighborhood in the direction $-\tau$}
of $J_{\hat{u}}$, 
\[
	 J^\tau:=\bigcup_{x\in J_{\hat{u}}}[x,x-\tau]
	 =\{y\in\R^n: [y,y+\tau]\cap J_{\hat u}\ne\emptyset\},\quad \textrm{for }\tau\in\R^n	 ,
\]
and the {longitudinal} difference quotient along the edge $\tilde e_j$ of $S\in \scrS$
\[
\triangle^S_{j,h}(z):=\frac{(\hat u(z+h\pre_j+h\tilde e_j)-\hat u(z+h\pre_j))\cdot \tilde e_j}{h|\tilde e_j|^2}
\]
for $[z+h\pre_j,z+h\pre_j+h\tilde e_j]\subset\hat\Omega$, 
and zero elsewhere,
where $\pre_j$ and $\pre_j+\tilde e_j$ are the only two vertices of {$S$} whose difference is $\tilde e_j$.
Let us introduce the discrete bulk and surface energies 
\begin{align}\label{e:E2}
& \displaystyle E_1^{y,h}(\hat{\Omega}):=\frac{h^n}{n!}
\sum_{\scriptsize \begin{array}{c}
	 {A\!\!\in\!\!\calA}\\ S\!\!\in\!\! \scrS
	\end{array}}\sumxi
 \nonumber\\
& \hskip3cm
\Big|\sum_j\alp^{A,S}_{j}\triangle^S_{j,h}(\xi+hy)(1-1_{J^{h\tilde e_j}}(hy+\xi+ha_j))\Big|^p,\nonumber\\
& \displaystyle E_2^{y,h}(\hat{\Omega}):=\tilde{c}_1 h^{n-1}\sum_{e\in \bigcup_S D_S}\sumxi \frac{1_{J^{he}}(hy+\xi)}{|e|},
\end{align}
where {$1_B$ denotes the characteristic function of the set $B$}, $\alp^{A,S}_j$ are the coordinates of {$A$} in the basis {$\{\tilde \nu_j\otimes \tilde \nu_j:\ \tilde e_j\in D_S\}$ of $\Msym$, where $\tilde \nu_j=\tilde e_j/|\tilde e_j|$} 
and {$\tilde c_1:=2^nn\sqrt n$, the latter choice will be motivated later}.

Let $w_{y,h}$ be the piecewise {affine} function obtained interpolating $\hat{u}$ on each simplex of the partition.
Let us prove that there exist $y\in[0,1)^n$ and a subsequence of ${h\downarrow 0}$ not relabeled, such that 
\begin{itemize}
	\item[\rm ($1'$)] $\displaystyle\|w_{y,h}-\hat{u}\|_{L^p(\Omega,\R^n)}\to0$;
	\item[\rm ($2'$)] $\displaystyle\lim_{h\to\infty}\Big[E^{y,h}_1(\hat{\Omega})+ E^{y,h}_2(\hat{\Omega})\Big]\leq\int_{\hat{\Omega}} W(e(\hat{u}))dx+c_1\Hn(J_{\hat{u}})$,
	where $c_1$ is a constant depending on $\tilde{c}_1$ and $W$ is {the integrand}
	defined in \eqref{e:W}.
\end{itemize}
In order to prove ($1'$) we observe that for every simplex $\xi+hy+hS$ of the partition with vertices $a_i$, $i=0,\dots,n$, there exist $n+1$ affine functions $f_i$ such that 
\begin{equation*}\sum_{i=0}^{n}f_i=1\hskip5mm\text{ and } \hskip5mm
 w_{y,h}=\sum_{i=0}^{n}\hat{u}(a_i)f_i \quad\textrm{on } \xi+hy+hS.
\end{equation*}
Then integrating on $[0,1)^n$, we deduce by convexity and Fubini's theorem {for $h$ sufficiently small}
\begin{align*}
	\int_{[0,1)^n}dy &\int_{\Omega}|w_{y,h}{(x)}-\hat{u}{(x)}|^pdx\\
	& \leq
	c\int_{[0,1)^n}dy\sum_{\xi\in h\Z^n\cap\hat{\Omega}}\int_{\Omega\cap(\xi+hy+[-h,h]^n)}|\hat{u}(\xi+hy)-\hat{u}{(x)}|^pdx\\
	& =c\int_{\Omega}dx\sum_{\xi\in h\Z^n\cap\hat{\Omega}}	\int_{[0,1)^n}1_{\xi+hy+[-h,h]^n}(x)|\hat{u}(\xi+hy)-\hat{u}(x)|^pdy,
\end{align*}	
{where $c$ takes into account the convexity of the power 
$\R\ni t\mapsto |t|^p$ and the number of simplexes sharing a 
certain $a_i$ as a vertex.}
Changing variable $z=(x-hy-\xi)/h$ in the second integral we obtain
\begin{align*}
\int_{[0,1)^n}dy &\int_{\Omega}|w_{y,h}{(x)}-\hat{u}{(x)}|^pdx\\ 
&\leq c\int_{\Omega}dx\sum_{\xi\in h\Z^n\cap\hat{\Omega}}
\int_{(\frac{x-\xi}{h}-[0,1)^n)\cap(-1,1)^n}|\hat{u}(x-hz)-\hat{u}(x)|^pdz\\ 
& \leq c\int_{(-1,1)^n}dz\int_{\Omega}|\hat{u}(x-hz)-\hat{u}(x)|^pdx.
\end{align*}	
By dominated convergence theorem the last term vanishes {as $h\downarrow 0$}, 
therefore there is a subsequence of $h$, not relabeled, and a measurable set of full 
measure $E\subset[0,1)^n$ such that for every $y\in E$ the convergence in $(1')$ holds.

Let us prove now $(2')$. We integrate again on $[0,1)^n$ and estimate first the bulk part.
Changing variable $x=hy+\xi$ and then slicing through Fubini's theorem we obtain
\begin{align}\label{E3amezza}
 \int_{[0,1)^n}& E_1^{y,h}(\hat{\Omega})\,dy\notag\\
&=\frac{1}{n!}
\sum_{\scriptsize \begin{array}{c}
	 {A\!\!\in\!\!\calA}\\ S\!\!\in\!\! \scrS
	\end{array}}
\sum_{\xi\in h\Z^n}\int_{\xi+h[0,1)^n}
\nonumber\\
&\hskip3cm
{{1_{\hat{\Omega}}(x)}} \Big|\sum_j\alp^{A,S}_{j}\triangle^S_{j,h}(x)(1-1_{J^{h\tilde e_j}}(x+ha_j))\Big|^pdx\notag\\
&=\frac{1}{n!}
\sum_{\scriptsize \begin{array}{c}
	 {A\!\!\in\!\!\calA}\\ S\!\!\in\!\! \scrS
	\end{array}}\int_{{\hat{\Omega}}}
\Big|\sum_j
\frac{\alp^{A,S}_{j}}{h|\tilde e_j|}\Big(\hat{u}^{\tilde \nu_j}_{\pi_j(x+ha_j)}((x+ha_j)\cdot\tilde \nu_j+h|\tilde e_j|)-\notag\\
&\hat{u}^{\tilde \nu_j}_{\pi_j(x+ha_j)}((x+ha_j)\cdot\tilde \nu_j)\Big)(1-1_{J^{h\tilde e_j}}(x+ha_j))\Big|^pdx,
\end{align}
where $\pi_j$ is the {orthogonal} projection on $\Pi^{\tilde \nu_j}{:=\tilde \nu_j^\perp}$.
We {recall that} $\tilde \nu_j=\tilde e_j/|\tilde e_j|$ and that the slice is defined as usual by {$\hat{u}^{\nu}_z(s):=\hat{u}(z+s\nu)\cdot \nu$ for $z\in\Pi^{\nu}$.}

Since $\hat{u}\in GSBD^p{({\hat{\Omega}})}\cap L^p(\hat{\Omega};\R^n)$ we have 
{$\hat{u}^{\nu}_z\in SBV^p(\hat{\Omega}^{\nu}_z)$, for {$\Hn$}-a.e. $z\in\Pi^{\nu}$. Observe that $1_{J^{he}}(z+s\nu)=0$, for $e=|e|\nu$ and $z\cdot\nu=0$, 
means $z+s\nu\not\in J^{he}$, which is the same as $[s, s+h|e|]\cap (J_{\hat u})_z^\nu=\emptyset$. For almost every $z$, by (\ref{eqslicepr}) this implies
$[s,s+h|e|]\cap J_{\hat u_z^\nu}=\emptyset$. Therefore}
\eqref{E3amezza} yields 
\begin{align}\label{E3amezza2}
&\int_{[0,1)^n}  E_1^{y,h}(\hat{\Omega})\,dy\nonumber\\
& \leq \frac{1}{n!}
\sum_{\scriptsize \begin{array}{c}
	 {A\!\!\in\!\!\calA}\\ S\!\!\in\!\! \scrS
	\end{array}}\int_{{\hat{\Omega}}}
\Big|\sum_j\frac{\alp^{A,S}_{j}}{h|\tilde e_j|}\int_0^{h|\tilde e_j|}(\hat{u}^{\tilde \nu_j}_{\pi_j(x+ha_j)})'(t+(x+ha_j)\cdot\tilde \nu_j))dt\Big|^pdx\nonumber\\
&=\frac{1}{n!}
\sum_{\scriptsize \begin{array}{c}
	 {A\!\!\in\!\!\calA}\\ S\!\!\in\!\! \scrS
	\end{array}}\int_{{\hat{\Omega}}}
\Big|\sum_j\alp^{A,S}_{j}\int_0^{h|\tilde e_j|}\hspace{-1.05cm}-(e(\hat u)\tilde \nu_j\cdot\tilde \nu_j)(x+ha_j+t\tilde \nu_j)dt\Big|^pdx.
\end{align}
As ${h\downarrow 0}$ we find by the continuity of the translation, the Lebesgue theorem, and the dominated convergence theorem 
\begin{align*}
\limsup_{h\downarrow 0}&\int_{[0,1)^n}E_1^{y,h}(\hat{\Omega})\,dy\leq
\frac{1}{n!}
\sum_{\scriptsize \begin{array}{c}
	 {A\!\!\in\!\!\calA}\\ S\!\!\in\!\! \scrS
	\end{array}}
\int_{\hat{\Omega}}
\Big|\sum_j\alp^{A,S}_{j}e(\hat u)\tilde \nu_j\cdot\tilde \nu_j\Big|^pdx\notag\\
&=\frac{1}{n!}
\sum_{\scriptsize \begin{array}{c}
	 {A\!\!\in\!\!\calA}\\ S\!\!\in\!\! \scrS
	\end{array}}
\int_{\hat{\Omega}}
\Big|\sum_j\alp^{A,S}_{j}e(\hat u):\tilde \nu_j\otimes\tilde \nu_j\Big|^pdx\notag\\
& =\frac{1}{n!}
\sum_{\scriptsize \begin{array}{c}
	 {A\!\!\in\!\!\calA}\\ S\!\!\in\!\! \scrS
	\end{array}}
\int_{\hat{\Omega}}
\Big|e(\hat u):{A}\Big|^pdx
= \int_{\hat{\Omega}}W(e(\hat u))dx.
\end{align*}
Arguing in a similar way for $E^{y,h}_2$ we obtain
\begin{align}\nonumber
\displaystyle\int_{{[0,1)^n}}E_2^{y,h}(\hat{\Omega})\,dy
&{=\tilde c_1 
\sum_{e\in \cup_{S}D_S}
\int_{\R^n} \frac{1_{J^{he}}}{h|e|} dz}\\
&\leq \tilde{c}_1\sum_{e\in \cup_{S}D_S}\int_{J_{\hat{u}}}|\nu_{\hat{u}}\cdot \nu_e|d{\Hn}\leq c_1 {\Hn}(J_{\hat{u}}),
\label{E2a}
\end{align}
having set {$c_1:=\tilde{c}_1\#(\cup_{S\in\scrS}D_S)=
2^{n-1}n^{\sfrac 52}(n+1)n!$, $\nu_e:=e/|e|$, and having used 
in the last inequality the slicing formula
$$
\int_{J_{\hat{u}}}|\nu_{\hat{u}}\cdot\nu_e|\,d\Hn=
\int_{\Pi^{\nu_e}}\#(J_{\hat u^{\nu_e}_z})\,d\Hn(z)\,.
$$
}

By inequalities \eqref{E3amezza2} and \eqref{E2a} and Fatou's lemma we conclude there there exists $y\in [0,1)^n$ and a subsequence of ${h\downarrow 0}$ not relabeled {for convenience} such that properties ($1'$) and ($2'$) hold. {In what follows we drop the index $y$ ad denote $w_{y,h}$ simply by $w_h$}.

\medskip
We define now {a sequence} $v_h$ as $0$ in the cubes $Q=\xi+hy+[0,h)^n$ such that 
$J_{\hat{u}}$ crosses an edge of {$\xi+hy+hS$} for some $S\in \scrS$,
while we set $v_h:=w_h$ otherwise. In the first case we say that the cube is bad, in the second 
case that it is good. {We let $\Sigma_h$ be the union of the faces of the bad cubes.} We claim that
\begin{itemize}
	\item[\rm ($1''$)\phantom{a.}] $\displaystyle\|w_h-v_h\|_{L^p(\Omega,\R^n)}\to0$,
	\item[\rm ($2''$)\phantom{a.}] the constant $\tilde{c}_1$ in (\ref{e:E2}) can be chosen 
	in a way that {for every $h$ sufficiently small}\\
	$\displaystyle\int_{\Omega}W(e(v_h))dx+{\Hn(\Sigma_h)}\leq E^{h}_1(\hat{\Omega})+ E^{h}_2(\hat{\Omega})$.
\end{itemize}
As for ($2''$), we first notice that $\Hn(\overline{J_{v_h}})\leq 2nh^{n-1}N_h$, being $N_h$ the number of bad cubes. 
{For every bad cube $Q:=\tilde\xi+hy+[0,h)^n$ there is at least one pair $e\in \bigcup_S D_S$, $\xi\in h\Z^n$, such that
$[\xi+hy,\xi+hy+he]\subset\overline Q$ and $1_{J^{he}}(\xi+hy)=1$. At the same time, the edge 
$[\xi+hy,\xi+hy+he]$ is shared by at most $2^{n-1}$ cubes. Therefore
$$
N_h\le 2^{n-1} \sum_{e\in \bigcup_S D_S}\sum_{\xi\in h\Z^n} 1_{J^{he}}(\xi+hy).$$
Recalling that $|e|\le \sqrt n$ and defining $\tilde c_1:=2^nn\sqrt n$ we obtain from the definition of 
$E^h_2(\tilde \Omega)$ that
\begin{equation}\label{eq:sup}
\calH^{n-1}(\Sigma_h)\le 2n h^{n-1} N_h\leq E^h_2(\tilde \Omega).
\end{equation}}
Let us prove now that
\begin{equation}\label{eq:vol}
\int_{\Omega}W(e(v_h))dx\leq E^h_1(\hat{\Omega}).
\end{equation}
By definition of $W$, for each good cube $Q=[0,h)^n+\xi+hy$ of the lattice being $v_h=w_h$ we obtain
\[\int_{[0,h)^n+\xi+hy}W(e(v_h))dx=\sum_{\scriptsize \begin{array}{c} {A\!\!\in\!\!\calA}\\ S\!\!\in\!\!\scrS \end{array}}\int_{hS+\xi+hy}|e(w_h){:A}|^pdx,\]
since $\scrS$ gives a partition of the cube. Recalling that $\alpha_j^{A,S}$ denotes the coefficients of {$A$} in the basis $\{\tilde \nu_j\otimes \tilde \nu_j:\ \tilde e_j\in D_S\}$, we have 
\begin{align*}
\int_{[0,h)^n+\xi+hy} W(e(v_h))&dx
=\sum_{\scriptsize \begin{array}{c} {A\!\!\in\!\!\calA}\\ S\!\!\in\!\!\scrS \end{array}}
\int_{hS+\xi+hy}|e(w_h):\sum_{j}\alpha_j^{A,S}
\tilde \nu_j\otimes\tilde \nu_j|^pdx\notag\\
&=\sum_{\scriptsize \begin{array}{c} {A\!\!\in\!\!\calA}\\ S\!\!\in\!\!\scrS \end{array}}
\int_{hS+\xi+hy}|\sum_{j}\alpha_j^{A,S} e(w_h)\tilde \nu_j\cdot\tilde \nu_j|^pdx.
\end{align*}
Since $w_h$ is the affine interpolation of ${\hat u}$ on each simplex constituting $Q$, we have
\[e(w_h)\nu\cdot\nu=\frac{(w_h(a)-w_h(b))\cdot\nu}{|a-b|}=
{\frac{(\hat u(a)-\hat u(b))\cdot\nu}{|a-b|}},\]
for every pair $a,b$ of vertices of $Q$, with $\nu=(a-b)/|a-b|$.
Therefore 
\[\int_{[0,h)^n+\xi+hy}W(e(v_h))dx= 
\sum_{\scriptsize \begin{array}{c} {A\!\!\in\!\!\calA}\\ S\!\!\in\!\!\scrS \end{array}}
\frac{h^n}{n!} \Big|\sum_j\alp^{A,S}_{j}\triangle^S_{j,h}(\xi+hy)\Big|^p,\]
recalling that the difference quotient is defined by
\[
\triangle^S_{j,h}(z)=\frac{(\hat u(z+h\pre_j+h\tilde e_j)-\hat u(z+h\pre_j))\cdot \tilde e_j}{h|\tilde e_j|^2},\quad \textrm{for }z\in \hat\Omega,
\]
and that {$\pre_j,\pre_j+e_j$} represent the only two vertices of $S$ whose difference is {$\tilde e_j$}. Summing on the good cubes $Q$ which intersect $\Omega$ we finally obtain \eqref{eq:vol}. Property ($2''$) then follows by \eqref{eq:sup} and \eqref{eq:vol}. 

To check ($1''$) we use ($1'$) and we observe that
\[\|v_h-w_h\|^p_{L^p(\Omega,\R^n)}=\int_{{\mathscr{C}}}|w_h|^pdx,\]
where {$\mathscr{C}$ denotes} the union of the bad cubes. Notice that {$\mathscr{C}$ has
small Lebesgue measure, indeed}
{by (\ref{eq:sup})
\[\calL^n(\mathscr{C})\leq h^n N_h =O(h).\]}

Properties ($1'$), ($2'$), ($1''$), ($2''$), together with Lemma \ref{l:exte}, imply ($1$) and ($2$).
\end{proof}
Using a {by now standard} Besicovitch covering argument we can refine the estimate obtained in Theorem \ref{t:badcost} reducing  the coefficient of the surface term to $1$. {The idea is to cover the most of the jump set of $u$ with a finite number of pairwise disjoint closed balls, in a way that the jump set in each of them is close to a $C^1$ hypersurface separating the ball into two components. Then Theorem \ref{t:badcost} is applied in each component and in the complement of the balls, so that the jump of the resulting function is the union of the $C^1$ hypersurfaces separating the balls and of the $(n-1)$-dimensional faces of closed simplexes obtained by applying Theorem \ref{t:badcost} (see \cite[Theorem 2]{Chambolle2004} or \cite[Theorem 6]{iur12} for more details)}.
\begin{theorem}\label{t:goodcost}
	Let $\Omega\subset\R^n$ be an open bounded set with Lipschitz boundary, {$p\in(1,\infty)$,} and let $u\in GSBD^p{(\Omega)}\cap L^p(\Omega;\R^n)$. Then there exists a sequence $u_{{j}}\in SBV^{{p}}\cap L^p(\Omega;{\R^n})$ 
	such that $J_{u_{{j}}}$ is contained in the union $S_{{j}}$ of a finite number of closed connected pieces of ${C}^1$-hypersurfaces, $u_{{j}}\in W^{1,\infty}(\Omega\setminus S_{{j}};{\R^n})$, and the following properties hold:
	\flushleft
	
	\begin{itemize}
		\item[\rm ($1$)] $\|u_{{j}}-u\|_{{L^p(\Omega;\R^n)}}\to0$;
		\item[\rm ($2$)] $\displaystyle\limsup_{{{j}}\to\infty}\Big(\int_\Omega W(e(u_{{j}}))\,dx+\mathcal{H}^{{n-1}}(S_{{j}})\Big)\leq\int_\Omega W(e(u))\,dx+\mathcal{H}^{{n-1}}(J_u)$.
		\end{itemize}
		\end{theorem}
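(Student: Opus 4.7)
The plan is to use a Besicovitch-type covering argument to isolate most of $J_u$ into finitely many pairwise disjoint closed balls $\overline{B_i}$ inside each of which $J_u$ is well-approximated by a single $C^1$ hypersurface $\Gamma_i$ splitting $B_i$ into two open components $B_i^\pm$, and then to apply Theorem \ref{t:badcost} separately to each component $B_i^\pm$ and to $\Omega\setminus\bigcup_i\overline{B_i}$. After pasting, the hypersurfaces $\Gamma_i$ enter $J_{u_j}$ with the sharp constant $1$, while the ``bad'' factor $c_1$ from Theorem \ref{t:badcost} multiplies only a small residual piece of $J_u$.

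Concretely, fix $\eta>0$. Using $\calH^{n-1}$-rectifiability of $J_u$ and Besicovitch's covering theorem (applied to the family of balls centered on $J_u$ on which $J_u$ is arbitrarily close to a tangent hyperplane), I would produce finitely many pairwise disjoint closed balls $\overline{B_i}=\overline{B_{r_i}(x_i)}\subset\Omega$, $i=1,\dots,N$, and embedded $C^1$ graphs $\Gamma_i\subset B_i$ extending smoothly through $\partial B_i$, satisfying
\[
\calH^{n-1}\Bigl(J_u\setminus\bigcup_i B_i\Bigr)\le \eta,\qquad
\sum_i\calH^{n-1}(\Gamma_i)\le \calH^{n-1}(J_u)+\eta,
\]
and $\calH^{n-1}((J_u\cap B_i)\triangle\Gamma_i)\le \eta/N$ for each $i$. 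Each $\Gamma_i$ separates $B_i$ into two open Lipschitz pieces $B_i^\pm$.

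Next, I would apply Lemma \ref{l:exte} and Theorem \ref{t:badcost} independently on $\Omega\setminus\bigcup_i\overline{B_i}$ and on each $B_i^\pm$ to the restrictions of $u$, producing sequences of approximants $v_j^0$ and $v_j^{i,\pm}$ converging in $L^p$ to $u$ on each piece, with bulk$+$surface energy on the piece bounded above in the $\limsup$ by the bulk energy of $u$ on the piece plus $c_1$ times the $\calH^{n-1}$-measure of $J_u$ on the piece. Since $J_u\cap(B_i^+\cup B_i^-)=J_u\cap B_i\setminus\Gamma_i$ has measure at most $\eta/N$, the total surface contribution of these applications is at most $c_1(\eta+\sum_i\eta/N)=2c_1\eta$. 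Define $u_j$ as $v_j^0$ on $\Omega\setminus\bigcup_i\overline{B_i}$ and as $v_j^{i,\pm}$ on $B_i^\pm$. Then each $\Gamma_i$ becomes a genuine piece of $J_{u_j}$ (giving the sharp constant $1$ on that portion), so that
\[
\limsup_{j\to\infty}\Bigl(\int_\Omega W(e(u_j))\,dx+\calH^{n-1}(J_{u_j})\Bigr)\le \int_\Omega W(e(u))\,dx+\calH^{n-1}(J_u)+(1+2c_1)\eta.
\]
Letting $\eta\downarrow 0$ through a standard diagonal argument produces the required sequence.

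The main obstacle is the compatibility of the gluing across the sphere $\partial B_i$: the approximations on the two sides of $\partial B_i$ must match to within a negligible error so that no spurious jump set of total area comparable to $\sum_i r_i^{n-1}\sim \calH^{n-1}(J_u)$ is created there. This is arranged by invoking Lemma \ref{l:exte} on slightly enlarged Lipschitz collars across $\partial B_i$, so that the piecewise-affine constructions of Theorem \ref{t:badcost} in $B_i^\pm$ and in $\Omega\setminus\bigcup_i\overline{B_i}$ rely on the same extended function $\hat u$ in a neighborhood of $\partial B_i$; the corresponding traces then agree in the limit and the surface contribution of $\partial B_i$ to $J_{u_j}$ vanishes. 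Once this matching is in place, the bulk and $L^p$ bookkeeping is a direct consequence of Theorem \ref{t:badcost} applied on each piece.
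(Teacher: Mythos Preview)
Your proposal is correct and follows the same approach as the paper, which merely sketches the argument (Besicovitch covering of most of $J_u$ by pairwise disjoint balls in which the jump set is close to a separating $C^1$ hypersurface, then application of Theorem~\ref{t:badcost} in each half-ball and in the complement) and refers to \cite{Chambolle2004,iur12} for the details. You in fact go a step further than the paper's sketch by explicitly identifying the gluing issue across the spheres $\partial B_i$ and indicating the correct resolution---basing the piecewise-affine constructions on a common extension $\hat u$ and a common grid in a collar of $\partial B_i$ so that the interpolants agree there and no spurious surface term of order $\sum_i r_i^{\,n-1}$ is produced.
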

{\begin{remark}
We emphasize that the growth hypothesis $p>1$ is in fact not needed in the proof of Theorem \ref{t:goodcost}. It is only used to deduce 
conditions (2) and (3) in Theorem \ref{densitytheorem} by means of
the $GSBD^p$ compactness result in \cite[Theorem 11.3]{gbd} 
and strict convexity of $W$ as oulined below.
\end{remark}}	
Theorem \ref{densitytheorem} easily follows from Theorem \ref{t:goodcost}.		
\begin{proof}[Proof of Theorem \ref{densitytheorem}.]
	Let $u_{{j}}$ be given by Theorem \ref{t:goodcost}. By compactness in {$GSBD^p$} \cite[Theorem 11.3]{gbd} there exists a subsequence of $u_{{j}}$, not relabeled, satisfying
	\begin{eqnarray}
	\label{e}
	& e(u_{{j}})\rightharpoonup e(u)\quad \textrm{weakly in }L^p(\Omega,\R^{{n{\times}n}}),\\
	\label{q}
	& \displaystyle\int_{\Omega}W(e(u))dx\leq\liminf_{{{j}}\to\infty}\int_{\Omega}W(e(u_h))dx,\\
	\label{h}
	& \mathcal{H}^{{n-1}}(J_u)\leq\liminf_{{{j}}\to\infty}\mathcal{H}^{{n-1}}(J_{u_{{j}}}).
	\end{eqnarray}
	By {virtue of} inequality ($2$) of Theorem \ref{t:goodcost}, (\ref{q}), and (\ref{h}) we obtain
	\begin{eqnarray*}
	& \displaystyle\int_{\Omega}W(e(u))dx=\lim_{{{j}}\to\infty}\int_{\Omega}W(e(u_{{j}}))dx,\\
	& \displaystyle\mathcal{H}^{{n-1}}(J_u)=\lim_{{{j}}\to\infty}\mathcal{H}^{{n-1}}(J_{u_{{j}}}),
	\end{eqnarray*}
and the thesis follows at once {from \eqref{e} and the strict convexity of $W$}.
\end{proof}		
\section*{Acknowledgments} 

This work was partially supported 
by the Deutsche Forschungsgemeinschaft through the Sonderforschungsbereich 1060 
{\sl ``The mathematics of emergent effects''}, project A5. 
{F.~Iurlano has been partially supported by the program FSMP and the project Emergence Sorbonne-Université ANIS.}
{M. Focardi has been partially supported by GNAMPA.}


\end{document}